\documentclass[12pt]{article}
\usepackage[english]{babel}
\usepackage{amsmath,amssymb,amsthm,xspace}

\RequirePackage{ifpdf}
\ifpdf
   \usepackage[pdftex]{hyperref}
\else
   \usepackage[hypertex]{hyperref}
\fi

\title{Sharply $3$-transitive groups}
\author{Katrin Tent\footnote{Partially supported by SFB 878}}
\date{\today}

\newtheorem{theorem}{Theorem}[section]

\newtheorem{lemma}[theorem]{Lemma}

\newtheorem{corollary}[theorem]{Corollary}
\newtheorem{definition}[theorem]{Definition}
\newtheorem{remark}[theorem]{Remark}
\newtheorem{normalform}[theorem]{Normal forms}

\newcommand{\nc}{\newcommand}
\nc{\inv}{^{-1}}
\nc{\Q}{\mathbb{Q}}
\nc{\R}{\mathbb{R}}
\nc{\F}{\mathbb{F}}
\nc{\C}{\mathcal{C}}
\nc{\G}{\mathcal{G}}
\nc{\M}{\mathcal{M}}
\nc{\U}{\mathbb{U}}
\nc{\Frl}{Fra\"iss\'e limit\xspace}
\nc{\Frls}{Fra\"iss\'e limits\xspace}
\nc{\fg}{finitely generated\xspace}

\renewcommand{\phi}{\varphi}

\newcommand{\Ind}{
 \setbox0=\hbox{$x$}\kern\wd0\hbox to 0pt{\hss$
 \mid$\hss}\lower.9\ht0\hbox to 0pt{\hss$\smile$\hss}\kern\wd0
}

\begin{document}
\maketitle
\begin{abstract}We construct the first sharply $3$-transitive groups not arising from a near field, i.e. point stabilizers have no
nontrivial abelian normal subgroup. \footnote{Keywords: permutation groups, sharply $3$-transitive actions, amalgamated products, nearfield}
\end{abstract}

\section{Introduction}

The finite sharply $2$- and $3$-transitive groups were classified by Zassenhaus in \cite{Z1} and \cite{Z2} 
in the 1930's and were shown to arise
from so-called near-fields. They essentially look like the groups
of affine linear transformations $x\mapsto ax+b$ or
Moebius transformations $x\mapsto \frac{ax+b}{cx+d}$, respectively.

It remained an open problem whether the same
holds for infinite sharply $2$- and $3$-transitive 
groups and much literature
on this topic is available, see \cite{RST} for background and more recent references. 
In \cite{RST} the first construction of sharply $2$-transitive groups without any nontrivial abelian normal subgroup is given. 

However the question remained open whether the groups constructed
there can be extended to groups acting sharply $3$-transitively. We here
use a different approach directly constructing sharply $3$-transitive groups
using partial group actions. These are the first known examples of
sharply $3$-transitive groups whose point stabilizers have no non-trivial
abelian normal subgroup and thus do not arise from near-fields. 

By results of Tits \cite{Tits} and Hall \cite{hall} there are no infinite sharply $k$-transitive
groups for $k\geq 4$, see e.g. \cite{DM}.

\section{The main theorem}

For brevity we call an element of order $3$ a $3$-cycle and we
say that a group action is $3$-sharp if all $3$-point stabilizers are trivial.

\begin{theorem}\label{t:main}
Let $G_0$ be a group in which all $3$-cycles and all involutions, respectively, are conjugate and such that there exists a $3$-cycle $a$ and an involution $t$
in $G_0$ with $\langle a,t\rangle\cong S_3$. 
Assume that $G_0$ acts on a set $X_0$ in such a way that
  \begin{enumerate}
  \item the action is $3$-sharp;
  \item the involution $t\in G_0$ fixes a unique point $x_0\in X_0$;
  \item the $3$-cycle $a\in G_0$ is fixed point free;
  \item we have  $(x_0,x_0a,x_0a^2)=(x_0,x_0a,x_0at)$;
  \item if $B=(x,y,z)$ is a triple in $X_0$ such that the setwise
  stabilizer of $B$ in $G_0$ is isomorphic to $S_3$, 
  then there is some $g\in G_0$
  with $Ag=B$ where $A=(x_0,x_0a,x_0a^2)=(x_0,x_0a,x_0at)$.
  \end{enumerate}
  Then we can extend $G_0$ to a sharply $3$-transitive action of 
  \[G=\bigl(   (\langle a\rangle\times F(U))\ast  _{\langle a\rangle} G_0\ast_{\langle t\rangle}\left((\langle t\rangle\times F(S))\right)\bigr)\ast F(R)\]
on a suitable set $Y\supset X_0$, where $F(R), F(S), F(U)$ are free groups on disjoint sets $R, S,U$ with $|R|,|S|, |U|= \max\{|G_0|,\aleph_0\}$.
\end{theorem}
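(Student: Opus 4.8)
The plan is to build $Y$ together with the extended action by a transfinite recursion of length $\kappa=\max\{|G_0|,\aleph_0\}$, treating the action throughout as a \emph{partial} action of $G$ on a growing set and spending one fresh free generator at each step. It helps to reformulate the goal: an action is sharply $3$-transitive exactly when $G$ acts regularly (simply transitively) on the set of ordered triples of distinct points. The base triple is $A=(x_0,x_0a,x_0a^2)$; by~(4) and the $S_3$-relations its setwise stabilizer is $\langle a,t\rangle\cong S_3$, realizing the full $S_3$ on its three coordinates, while its pointwise stabilizer is trivial by~(1). Thus ordered triples should correspond to elements $g\in G$ via $Ag$, unordered triples to cosets $\langle a,t\rangle\backslash G$, and points to the single orbit $x_0G$; the whole task is to define $Y\supseteq X_0$ and the action so that these correspondences become bijections extending the given $G_0$-action on $X_0$.

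First I would record a normal form theorem for $G$. Since $G$ is an iterated amalgamated product over the finite cyclic groups $\langle a\rangle$ and $\langle t\rangle$ followed by a free product with $F(R)$, Bass--Serre theory (or the classical normal forms for amalgams and for free products) yields a canonical reduced form for each $1\ne g\in G$, and in particular the embedding $G_0\hookrightarrow G$ and the position of $(G_0)_{x_0}\le G$. The free factors are deliberately oversized so that an unused generator is always available: generators from $U$ commute with $a$, generators from $S$ commute with $t$ (and hence, since $t$ has the unique fixed point $x_0$, they must and will fix $x_0$, so $\langle t\rangle\times F(S)$ sits inside the point stabilizer $G_{x_0}$), while $R$ supplies generic elements subject to no constraints.

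The recursion then enumerates the requirements that sharp $3$-transitivity imposes: totality, that every generator eventually act as a total permutation of $Y$; and transitivity, that for every pair of ordered triples of distinct points that have appeared some element of $G$ carry one to the other. At a transitivity step for triples $B_1,B_2$, if no existing element already works I realize the required partial bijection by a fresh free generator, drawn from $R$ in the generic case and from $S$ or $U$ (combined with $a,t$ and $G_0$) precisely when $B_1$ or $B_2$ is \emph{special}, i.e.\ already carries an $S_3$ of symmetries. Condition~(5) guarantees that every special triple is $G_0$-equivalent to $A$, so these cases are handled uniformly inside the amalgamated part $\langle a,t\rangle\le G_0$, and the symmetry is respected exactly because the new generator is required to commute with $a$ or with $t$. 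Totality steps add fresh points as images where a generator is so far undefined. As the only relations among the new generators are the amalgam relations, each step extends the partial action consistently, and in the limit one obtains a genuine action of all of $G$ on $Y=\bigcup_\alpha X_\alpha$ that is total and transitive on triples.

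The main difficulty, and the step I expect to dominate the argument, is preserving $3$-sharpness in the limit: no $1\ne g\in G$ may fix three distinct points of $Y$. Here the normal form is essential. Writing $g$ as a reduced alternating product of $F(R)$-letters and nontrivial amalgam-syllables, I would run a ping-pong/length-reduction argument showing that applying $g$ to a point generic enough to involve only generators absent from $g$ strictly changes it, so that three fixed points force all syllables to cancel and hence $g=1$. The delicate interface is between the free letters and the amalgamated factors $\langle a\rangle\times F(U)$ and $\langle t\rangle\times F(S)$: one must check that the freshness discipline of the recursion --- always meeting a new demand with generators independent of the finite data involved --- blocks the accidental coincidences that would create a nontrivial $3$-point stabilizer or a spurious special triple. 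Condition~(2) keeps the fixed-point pattern of $t$ correct throughout, and condition~(3) keeps $a$ fixed-point-free so that each $Ag$ genuinely has three distinct coordinates; together with~(5) these are what make the sharpness bookkeeping close up.
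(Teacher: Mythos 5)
Your overall architecture matches the paper's: a partial action of the fixed group $G$, extended by transfinite recursion with totality steps (new points) and transitivity steps (fresh free letters), normal forms for the amalgam, and word combinatorics to preserve $3$-sharpness. The genuine gap is in your allocation rule for which kind of fresh letter realizes which transitivity demand --- and this allocation is the entire point of the specific shape of $G$. You propose $S$- or $U$-letters exactly when one of the two triples carries a full $S_3$ of symmetries (``special''), and $R$-letters otherwise. Both halves fail. A letter $s\in S$ can only ever map $A$ to a triple of the form $(x_0,y,yt)$: the relations $x_0s=x_0$ and $st=ts$ force $(x_0a^2)s=(x_0at)s=((x_0a)s)t=yt$, so the image is automatically $t$-invariant; likewise $u\in U$ forces the image of $A$ to be of the form $(y,ya,ya^2)$, hence $a$-invariant. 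So $S$/$U$-letters cannot join $A$ (which is special) to a free triple at all, and they can join $A$ to a partially symmetric triple only when the letter type \emph{matches} the symmetry type ($U$ for $a$-triples, $S$ for $t$-triples) --- a distinction your rule, which looks only at specialness, never makes. Conversely, $R$-letters must be reserved for genuinely \emph{free} triples: if $B$ is invariant under an involution $t'$ (a conjugate of $t$) and you set $Ar=B$ with $r\in R$, then $rt'r^{-1}$ induces a transposition on $A$, so $rt'r^{-1}g$ fixes $A$ pointwise for the element $g\in\langle a,t\rangle$ inducing the same transposition; this word is nontrivial in $G$ by the free-product normal form, so $3$-sharpness is destroyed. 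The same computation kills $R$-joins between a $t$-invariant and an $a$-invariant triple, both of which are ``generic'' in your sense.

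The correct rule, which is what the paper proves works, is: match the letter to the existing symmetry of the triple being connected ($U$ for $a$-triples, $S$ for $t$-triples, $R$ only for triples with trivial setwise stabilizer, with conjugation reducing the general case to these), and never join a full-$S_3$ triple by a new letter at all --- such triples must \emph{already} be connected to $A$, which is what hypothesis (5) seeds and what the inductively maintained ``goodness'' conditions preserve (condition (5) by itself says nothing about special triples created later, contrary to your appeal to it). Under that matching, the would-be pointwise stabilizer above becomes $sts^{-1}t$ or $ua^{\pm 1}u^{-1}a^{\mp 1}$, which is trivial precisely because of the commutation built into $\langle t\rangle\times F(S)$ and $\langle a\rangle\times F(U)$; in the word-combinatorial argument this is visible as the forbidden subwords $s_1ts_2$ and $u_1a^{\pm 1}u_2$ of the normal form. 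Your ping-pong sketch for sharpness cannot be completed under your allocation, because the violations above are not accidental coincidences that a freshness discipline can avoid: they are forced by the symmetries of the triples being joined. So while the surrounding machinery is right, the proposal fails at its central step.
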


\begin{remark}
Note that $S_3$ in its natural action on three elements satisfies
the assumptions of Theorem~\ref{t:main} 
(as does $PGL(2,2)\cong S_3$ acting as a subgroup of
$PGL(2,2^3)$ on the projective line over $\F_{2^3}$.)
\end{remark}

Therefore we have

\begin{corollary}
There exist groups $G$ acting sharply $3$-transitively on some set $X$
such that for $x\in X$ the point stabilizer $G_x$ of $x$ has no nontrivial abelian normal subgroup.
\end{corollary}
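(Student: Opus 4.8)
The plan is to apply Theorem~\ref{t:main} to $G_0=S_3$ acting naturally on three points, which satisfies all the hypotheses by the Remark. This yields a group $G$ acting sharply $3$-transitively on an (infinite) set $X\supset\{1,2,3\}$, so that for every $x\in X$ the point stabilizer $G_x$ acts sharply $2$-transitively on $\Omega=X\setminus\{x\}$. Since $G$ is transitive, all the $G_x$ are conjugate, so it suffices to rule out a nontrivial abelian normal subgroup in one stabilizer. The reformulation I would use is the classical one: a nontrivial normal subgroup $N\trianglelefteq G_x$ of the $2$-transitive (hence primitive) group $G_x$ is transitive on $\Omega$, and if $N$ is abelian it is therefore \emph{regular}; thus such an $N$ would be exactly the additive group of a near-field coordinatizing $G_x$. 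So the corollary amounts to showing that $G_x$ \emph{does not arise from a near-field}.

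Next I would pin down the characteristic via the word structure of $G$. Because $G=\bigl((\langle a\rangle\times F(U))\ast_{\langle a\rangle}G_0\ast_{\langle t\rangle}(\langle t\rangle\times F(S))\bigr)\ast F(R)$ is an iterated free/amalgamated product, Bass--Serre theory gives that every torsion element is elliptic, i.e.\ conjugate into a vertex group; the vertex groups are $\mathbb Z/3\times F(U)$, $S_3$, $\mathbb Z/2\times F(S)$ and $F(R)$, whose only torsion has order $1,2,3$. Consequently every involution of $G$ is conjugate to a transposition of $S_3$, hence to $t$, and every element of order $3$ is conjugate to $a$. Since $t$ fixes the unique point $x_0$ and $a$ is fixed point free, it follows that each involution of $G$ fixes exactly one point of $X$ and each $3$-cycle none. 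In particular the involutions lying in $G_x$ act \emph{without} fixed points on $\Omega$, so $G_x$ is of characteristic $2$. In that case, if a regular abelian normal $N\trianglelefteq G_x$ existed, then by the near-field (Zassenhaus) structure $N$ would be an elementary abelian $2$-group containing \emph{every} involution of $G_x$; in particular any two involutions of $G_x$ would commute.

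It then remains to reach a contradiction by exhibiting two involutions of $G$ with a common fixed point that do \emph{not} commute, i.e.\ two conjugates of $t$ fixing the same point whose product has order $\geq 3$. This is the \textbf{main obstacle}. Note one cannot simply conjugate an ``independent'' pair such as $t$ and $r^{-1}tr$ (with $r\in R$), whose product $tr^{-1}tr$ is a cyclically reduced word of infinite order in the free product and hence has infinite order, into a common stabilizer: conjugation carries the unique fixed point of an involution to the unique fixed point of its image, so a pair with distinct fixed points can never be merged into one stabilizer. The genuinely non-commuting pair must therefore be produced \emph{intrinsically} inside a single $G_x$. The plan is to use the free factors $F(R),F(S),F(U)$ together with the normal-form theorem for the amalgam to locate an element $g\in G_x$, furnished by the richness of the completion in Theorem~\ref{t:main}, for which $t$ and $g^{-1}tg$ both fix $x$ while $t\,g^{-1}t\,g$ is cyclically reduced of infinite order, so that $\langle t,\,g^{-1}tg\rangle$ is infinite dihedral. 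The delicate point is certifying simultaneously that $g$ fixes $x$ (an action condition on the freely adjoined generators) and that the resulting word stays reduced; this is where the specific control over the action on $X$ established in the construction is essential. Granting such a pair, we contradict the commutativity forced above, so no regular abelian normal subgroup exists, $G_x$ has no nontrivial abelian normal subgroup, and hence $G_x$ does not arise from a near-field.
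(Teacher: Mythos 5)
Your argument tracks the paper's proof closely up to the decisive point: apply Theorem~\ref{t:main} to $S_3$, use conjugacy of involutions plus uniqueness of fixed points to put $G_x$ in characteristic~$2$, and invoke the structure theory (Neumann \cite{Neumann} / the near-field coordinatization) to conclude that a nontrivial abelian normal subgroup of $G_x$ would force distinct involutions of $G_x$ to commute. But your final step is a genuine gap: you reduce to exhibiting two \emph{non-commuting} involutions with a common fixed point, correctly identify that conjugation cannot merge involutions with distinct fixed points into one stabilizer, and then only sketch a plan (``Granting such a pair\ldots'') whose delicate point --- certifying that some $g$ with $tg^{-1}tg$ cyclically reduced actually fixes $x$ --- you leave unverified. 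As written, the proof does not close.

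The gap is also unnecessary, because you attacked the contrapositive from the hard side. The hypothetical subgroup $N$ itself hands you two distinct \emph{commuting} involutions: $N$ is regular on the infinite set $\Omega$, hence infinite, and consists of elements of order~$2$ (by Neumann, each a product of two involutions of $G_x$), so $\langle t_1,t_2\rangle$ would be a Klein four-group in $G$. Now push your own Bass--Serre observation one step further: not just torsion elements but every \emph{finite subgroup} of $G$ is conjugate into a vertex group of the graph of groups, and none of $\langle a\rangle\times F(U)\cong(\mathbb{Z}/3)\times F(U)$, $S_3$, $\langle t\rangle\times F(S)\cong(\mathbb{Z}/2)\times F(S)$, $F(R)$ contains a Klein four-group. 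Hence no two distinct involutions of $G$ commute anywhere --- a purely algebraic fact about the amalgam, requiring no control whatsoever over the action on $X$ --- and this is exactly how the paper concludes (``no two distinct involutions in $G$ commute''; the Neumann elements of order~$2$ would be products of two distinct, necessarily commuting, involutions of $G_x$). Replacing your unfinished search for a non-commuting pair in a single stabilizer by this Klein-four obstruction completes your proof with tools you already had on the table.
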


\begin{proof}
Applying Theorem~\ref{t:main} to $G_0=S_3=\langle a\rangle\rtimes \langle t\rangle$
we see that no two distinct involutions in $G$ commute.
The point stabilizer $G_x$ acts sharply $2$-transitively on $X\setminus\{x\}$. Since the involutions in $G_0$ and hence (by conjugacy) also in $G$ have unique fixed points,
the involutions of $G_x$ in their action on $X\setminus\{x\}$ are fixed point free, so the sharply $2$-transitive group $G_x$ is said to have characteristic $2$.  By \cite{Neumann} (see also e.g. \cite{BN}, 11.46) a nontrivial  abelian normal subgroup of $G_x$
would have to consist of elements of order $2$, each being the product of two involutions in $G_x$. From the construction of $G$, it can be seen that this is impossible in $G$.
\end{proof}
 
\section{The construction}

In this section we prove Theorem~\ref{t:main}, i.e. we construct
$G$ and its action on a set $X$ from $G_0$ and $X_0$. We continue to use
the notation introduced in the previous section.

For the construction we use partial group actions as in \cite{TZ}, considered also by Rips and Segev. The
construction proceeds by extending inductively both the group $G_0$ and
the underlying set $X_0$ in such a way that the assumptions of  Theorem~\ref{t:main} are preserved. This is done in two separate kinds of extensions: on the one hand we extend the group $G_0$ in order to make the action a bit more $3$-transitive. On the other hand we extend the underlying set $X_0$ in order to let the extended group act, forcing us in turn to extend the
group again etc. In the limit, the group $G$ will be sharply $3$-transitive on a set $X$ containing $X_0$.

\begin{definition}\label{d:partial}
  A partial action of $G$ on a set $X$ containing $X_0$  consists of an action of $G_0$
  on $X$ and partial actions of the generators in $S\cup R\cup U$
  such that
  \begin{enumerate}\item for $s\in S$ we have $x_0s=x_0$ and  if
    $xs$ is defined for $x\in X\setminus\{x_0\}$, then so is $(xt)s$ and we have
    $(xt)s=(xs)t$.
    \item for $u\in U, x\in X$, if $xu$ is defined, then so are $(xa)u$ and
    $(xa^2)u$ and we have $(xa)u=(xu)a$  and $(xa^2)u=(xu)a^2$.
\end{enumerate}
\end{definition}

We now define normal forms suitable for our purpose:

\noindent
\begin{normalform}\label{d:normalform}{\rm Any element of $G$ can be written (not necessarily uniquely) 
as a reduced word in the generators
$G_0\setminus\{1\}\cup S\cup S^{-1}\cup R\cup R^{-1}\cup U\cup U^{-1}$  where we
say that a word is reduced if there are no subwords of the form
\begin{itemize}
\item $ff^{-1}$ for $f\in S\cup S^{-1}\cup R\cup R^{-1}\cup U\cup U^{-1}$,
\item $gh$ for $g,h\in G_0\setminus\{1\}$;
\item $s_1ts_2$ where $s_1,s_2\in S\cup S^{-1}$;
\item $ts_1\cdots s_ng$ (or its inverse) where $s_i\in S\cup S^{-1}, i=1,\ldots, n, g\in  G_0\setminus\{1\}$;
\item $u_1a^{\pm 1}u_2$ where $u_1,u_2\in U\cup U^{-1}$;
\item   $a^{\pm 1}u_1\cdots u_ng$ or its inverse where $u_i\in U\cup\/U^{-1}, i=1,\ldots, n, g\in  G_0\setminus\{1\}$.
\end{itemize}  
  The word $w$ is called \emph{cyclically reduced} if $w$ and every cyclic
  permutation of $w$ is reduced.}
\end{normalform}

 We say that for a word 
$w=s_1\cdots s_n$ in $G_0\setminus\{1\}\cup S\cup S^{-1}\cup R\cup R^{-1}\cup U\cup U^{-1}$, the element
$xw$ is \emph{defined} for $x\in X$ if for all initial segments of $w$ the action
on $x$ is defined, i.e. $xs_1,\ldots, (\ldots (xs_1)\ldots )s_i, i\leq n$, are defined and we write $xw=(\ldots (xs_1)\ldots )s_n$.

Notice that for elements from $G_0$ the action on $X$ is defined everywhere. Hence if $w, w'$ are reduced words with $w=w'$ in $G$ and $xw$ is defined, then
$xw'$ is defined as well and $xw=xw'$. Thus the expression $xg=y$ makes sense for $g\in G, x,y\in X$. 

If $G$ acts partially on $X$, then there is a canonical partial action on the set of triples
\[(X)^3 = \{(x,y,z)\in X^3\mid |\{x,y,z\}|=3\}.\]

\noindent
{\bf Terminology and notation}
For a triple $C=(x,y,z)$ we say that $g\in G$ \emph{shifts} $C$ if $Cg=(y,z,x)$ or
$Cg=(z,x,y)$ and we say that $h\in G$ \emph{flips} $C$ if
$Ch=(x,z,y)$ or a shift of this.
Note that if there are such elements $g,h\in G$ then the setwise
stabilizer of $C$ in $G$ is isomorphic to $S_3$.
We also say that an element $g\in G$ leaves a triple $C$ \emph{invariant}
if $Cg$ is defined and is equal to $C$ as a set. In this case we call $C$
a $g$-triple.  We call a triple \emph{free} if the only element
leaving it invariant is the identity.

We call two triples $C$ and $C'$ \emph{connected} if there is $w\in G$ such that $Cw$ is defined and equals $C'$. 
If $w\in G$ leaves a triple $C$ invariant, then writing $w=s_1\ldots s_n$
in reduced form, the triples $(C,Cs_1,C s_1s_2,\ldots, Cw)$ 
form a cycle which we call \emph{braided} if $Cw$ is equal to
$C$ as a set, but not necessarily as an ordered triple.

\begin{definition}\label{d:good}
 We call a partial action of $G$ on  $X$ \emph{good} (and say that
 $G$ \emph{acts well} on $X$) if 
\begin{enumerate}
\item the $3$-cycle $a$ acts without fixed point on $X$;
\item the involution $t$ has a unique fixed point, namely $x_0\in X$;

\hspace{-1.2cm} and for all triples
  $C\in (X)^3$ and $g,h\in G$ the following holds:
  \item $Cg=C$ implies $g=1$. 
  \item If $h$ flips $C$, then $h$ is a conjugate of $t$.
  \item If $g$ shifts $C$,  then $g$ is a conjugate of $a$.
  \item If $g$ shifts $C$ and $h$ flips $C$, then there is some
         $g'\in G$ such that $Cg'=A$ where  $A=(x_0,x_0a,x_0a^2)=(x_0,x_0a,x_0at)$.
  \end{enumerate}
\end{definition}

Note that the original action of $G_0$ on $X=X_0$ is good
and that $A$ is the unique triple of $X_0$
invariant under both $a$ and $t$. In order to make the action $3$-transitive
it suffices to connect $A$ to any other triple of the underlying set.
Notice that since $a$ does not fix any point, we have $(x,xa,xa^2)\in (X)^3$ for all $x\in X$.

In the remainder of the section we extend a good partial action in
two ways: by letting free generators take the (distinguished) triple $A$
to all $a$-, $t$- and free triples in order to eventually make the action $3$-transitive, and by extending the domain of the partial action
of a free generator in order to eventually make the action total.
We start with the last one:

\begin{lemma}[Extending the free generators]\label{l:ext free}
  Assume that $G$ acts well on $X$ and that for some $x\in X$ and $f\in S\cup S^{-1}\cup U\cup U^{-1}\cup R\cup R^{-1}$
  the expression $xf$ is not defined.  (Then  $xtf$ is not defined if $f\in S\cup S^{-1}$ 
  and  $xaf, xa^2f$ are not defined in case $f\in U\cup U^{-1}$.)
  
   Let $x'G_0=\{x'g_0\mid g_0\in
  G_0\}$ be a set of new elements on which  $G_0$ acts regularly  and extend the
  partial operation of $G$ to $X'=X\cup x'G_0$ by putting
  
  \begin{enumerate}
  \item  $xf=x'$ if $f\in R\cup R^{-1}$;
  \item  $xf=x'$ and $(xt)f=x't$ (and $x_0f=x_0$) 
  if $f\in S\cup S^{-1}$;
  \item  $xf=x'$, $(xa)f=x'a$ and $xa^2f=x'a^2$ if $f\in U\cup U^{-1}$.
  \end{enumerate}
  Then $G$ acts well on $X'$.
\end{lemma}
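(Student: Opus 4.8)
The plan is to verify that adding the new orbit $x'G_0$ and extending the partial action of $f$ as prescribed preserves all six conditions of Definition~\ref{d:good}. The natural strategy is to check each clause in turn, observing that the new points in $x'G_0$ carry only the regular $G_0$-action together with the single new value $xf=x'$ (and its forced companions under the partial-action constraints of Definition~\ref{d:partial}). The key structural fact to exploit is that the only genuinely new adjacency introduced is the edge from $x$ to $x'$ labelled by $f$; everything else is either the old good action on $X$ or the free regular action of $G_0$ on the disjoint copy $x'G_0$.

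Let me think about how I would organize the verification. Conditions (1) and (2) are about $a$ and $t$: since $a$ and $t$ lie in $G_0$ and $G_0$ acts regularly on $x'G_0$, neither $a$ nor $t$ can fix a point of $x'G_0$ (regularity means only the identity fixes anything), so $a$ remains fixed-point-free and $t$ still has $x_0$ as its unique fixed point. For the triple conditions (3)--(6), I would argue that any triple $C$ left invariant, shifted, or flipped by some $g\neq 1$ must already have lived inside $X$. The crucial observation is that a nonidentity element witnessing such a relation on a triple meeting $x'G_0$ would have to traverse the unique new edge $x \leftrightarrow x'$; but because $xf$ was previously \emph{undefined}, there is no return path, so no reduced word can cyclically realize an invariance or shift or flip involving the new points. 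Concretely, I would write the relevant group element in reduced form and track where the single occurrence(s) of $f^{\pm 1}$ must appear, using the defined-ness conventions to show the orbit of a triple under such a word cannot close up onto itself or produce a cyclic/transposed image.

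The main obstacle, and the step deserving the most care, is the interaction between the forced companion values and the normal-form reductions. When $f\in S\cup S^{-1}$ I set $(xt)f=x't$, and when $f\in U\cup U^{-1}$ I set $(xa)f=x'a$ and $(xa^2)f=x'a^2$; these are exactly the identities required by Definition~\ref{d:partial}, but I must confirm that they do not secretly create a \emph{second} way back from $x'G_0$ into $X$ that could allow a short braided cycle through a triple. The parenthetical remark in the lemma statement (that $xtf$, resp. $xaf$ and $xa^2f$, are also undefined) is precisely what guarantees consistency: it ensures the companion assignments genuinely add new edges rather than conflicting with existing defined values, and it blocks the formation of a relation that would contradict condition (3). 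I would therefore spend the bulk of the argument showing that every configuration forcing a nontrivial stabilizer, shift, or flip on a triple touching $x'G_0$ reduces to such a configuration already present in $X$, whence conditions (4), (5), and (6) transfer immediately from the goodness of the action on $X$. Since no new triple is both shifted and flipped without already being connected to $A$ inside $X$, condition (6) in particular requires no new verification beyond this reduction.
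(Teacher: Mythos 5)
Your overall strategy matches the paper's: verify conditions (1)--(2) of Definition~\ref{d:good} directly from regularity of the $G_0$-action on $x'G_0$, then reduce conditions (3)--(6) to the old action by showing that any cyclically reduced word leaving a triple of $X'$ invariant either lies in $G_0$ or traces a cycle of triples entirely inside $X$. However, the crucial combinatorial step is missing, and where you sketch it, it is wrong. You claim that since $xf$ was previously undefined ``there is no return path'' from $x'G_0$ back into $X$, and you assert that the companion assignments do not ``secretly create a second way back.'' They do: for $f\in S\cup S^{-1}$ the assignment $(xt)f=x't$ means $f^{-1}$ carries $x't$ back to $xt$, and for $f\in U\cup U^{-1}$ the points $x'a$ and $x'a^2$ map back to $xa$ and $xa^2$. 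So a word such as $f\,t\,f^{-1}$ genuinely enters the new orbit and returns to $X$. The reason this causes no harm is not the absence of such paths but the fact that every such word fails to be reduced: Normal forms~\ref{d:normalform} expressly forbid the subwords $s_1ts_2$ (with $s_i\in S\cup S^{-1}$) and $u_1a^{\pm1}u_2$ (with $u_i\in U\cup U^{-1}$), which encode the commutation relations built into the factors $\langle t\rangle\times F(S)$ and $\langle a\rangle\times F(U)$ of $G$. You also misattribute the resolution to the parenthetical remark of the lemma (that $xtf$, $xaf$, $xa^2f$ are undefined): that remark only guarantees the extension is consistent, i.e.\ well defined; it blocks no relation.

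The paper's proof supplies exactly the analysis you omit. In the cycle of triples traced by a cyclically reduced $w$ applied to an invariant triple $C$, either some triple outside $X$ has both neighbours inside $X$, which forces the subword $ff^{-1}$ (since $f$ is the only letter leading out of $X$), or two adjacent triples $C_1, C_2$ lie outside $X$ and are connected by some $g\in G_0\setminus\{1\}$, giving a segment $(B,C_1,C_2,D)$ with $Bf=C_1$, $C_1g=C_2$, $C_2f^{-1}=D$ and $B,D$ in $X$. One then checks where $f^{-1}$ is defined on the new orbit: if $f\in R\cup R^{-1}$ no such $g$ exists at all (regularity), if $f\in S\cup S^{-1}$ then $g=t$, and if $f\in U\cup U^{-1}$ then $g=a^{\pm1}$; in the latter two cases $w$ contains the forbidden subword $f\,t\,f^{-1}$ or $f\,a^{\pm1}\,f^{-1}$, contradicting cyclical reducedness. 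Your proposal, as written, would go through essentially verbatim only for $f\in R\cup R^{-1}$; for the $S$ and $U$ generators it needs this normal-form case analysis, which is the real content of the lemma.
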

\begin{proof} 
First observe that this clearly defines a partial action in the sense of Definition \ref{d:partial} and that the actions of $a$ and $t$ still satisfy conditions 1. and 2. of Definition \ref{d:good}.
  
  For the remaining conditions of \ref{d:good} 
  it suffices to prove that if a cyclically reduced word $w$ leaves a triple $C$ in $X'$
  invariant, then $w\in G_0$ (and hence $w$ is conjugate to $a$ or $t$ by assumption) or $C$ and the (possibly braided) cycle described
  by $w$ applied to $C$ are contained in $X$. Since the previous
  action was good, this is enough.

  Suppose otherwise: let $w\in G\setminus G_0$ be cyclically reduced (in the sense of \ref{d:normalform})  leaving the triple $C$ invariant. Assume that at least one triple of the cycle given by applying $w$ to $C$ does not  belong to $X$. 
  
First assume that there is a triple in the cycle which does not belong to $X$, but both its neighbours do. This easily implies that a cyclic permutation of $w$ contains
the subword $ff^{-1}$  as $f$ is the only element  taking a triple
from $X$ to a triple not entirely belonging to $X$. So $w$ is
not cyclically reduced, a contradiction.

Next assume that there are two neighbouring triples $C_1,C_2$
  in the cycle which do not belong to $X$. Then by the properties of a cyclically reduced word and the
  definition of $X'$, $C_1$ and $C_2$ are connected by
  an element $g\in  G_0\setminus 1$. 
So the cycle contains a segment $(B,C_1,C_2,D)$ where the triples $B,D$ are contained in $X$ and necessarily $Bf=C_1, C_1g=C_2$ and $C_2f^{-1}=D$.

Then  $f\notin R\cup R^{-1}$ as $G_0$ acts regularly on $x'G_0$.
  
  If $f\in S\cup S^{-1}$, we must have $g=t$ since  on $x'G_0$ the element
 $f^{-1}$ is only defined on $x't$.
  So a cyclic permutation of
  $w$ contains the subword  $f\cdot t\cdot
  f^{-1}$, a contradiction.

   Similarly, if $f\in U\cup U^{-1}$, we have $g=a^{\pm 1}$ 
   and a cyclic permutation of
  $w$ contains the subword  $f\cdot a^{\pm 1}\cdot
  f^{-1}$, again  a contradiction.

This shows that if a triple $C$ becomes invariant under some $g\in G$
under the extended action, this is induced by conjugation under the
previous action. Hence Condition 4. of \ref{d:good} is preserved
as well. 
\end{proof}

We next show how to extend the group action in order to connect the unique  triple $A=(x_0,x_0a,x_0a^2)$ with $x_0at=x_0a^2$
to a triple $B$ where $B$ is either an $a$-triple, a $t$-triple
or a free triple:

\begin{lemma}[Connecting $A$ to other triples]\label{l:join}
  Assume that $G$ acts well on $X$,
  let $A=(x_0,x_0a,x_0a^2)=(x_0,x_0a,x_0at)$ be as before 
  and let $B$ be an $a$-, $t$- or a free triple
  for which there is no $g\in G$ with $Ag=B$.  
  Let $f\in R\cup S\cup U$ 
  be an element which does not yet act anywhere with
  \begin{enumerate}
  \item $f\in U$ if $B$ is an $a$-triple;
  \item $f\in S$ if $B$ is a $t$-triple;
  \item $f\in R$ if $B$  is a free triple
  
  \end{enumerate}  
  Extend the action by setting $Af=B$. Then this action of $G$ on $X$
  is again good.
\end{lemma}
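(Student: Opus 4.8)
The plan is to verify that the extended partial action on $X$ (with the single new rule $Af=B$) still satisfies all six conditions of Definition~\ref{d:good}. Conditions 1 and 2 concern only the actions of $a$ and $t$, which are unchanged, so they carry over immediately. For conditions 3--6, following the strategy of Lemma~\ref{l:ext free}, it suffices to understand which triples $C$ can become invariant under some cyclically reduced $w$ after adjoining $Af=B$, and to check that any such new invariance is again induced (up to conjugacy) by the old good action. The crucial structural point is that $f$ now sends exactly the triple $A$ to $B$ (and $f^{-1}$ sends $B$ to $A$), and by the relations forced on $f$ in Definition~\ref{d:partial} together with the choice of $f\in U$, $S$, or $R$ according to the type of $B$, this is the \emph{only} place $f$ acts. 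So any occurrence of $f^{\pm1}$ in a reduced word that is \emph{defined} on a triple must read a triple equal to $A$ (resp.\ $B$) at that position.

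Next I would run the same two-case analysis as in Lemma~\ref{l:ext free}. Suppose a cyclically reduced $w\notin G_0$ leaves a triple $C$ invariant and the cycle $(C,Cs_1,\ldots,Cw)$ uses the new action, i.e.\ some step applies $f^{\pm1}$. If a single step of the cycle uses $f^{\pm1}$ in isolation, then reading $A\xrightarrow{f}B$ forward and later backward forces a subword $ff^{-1}$ (or $f^{-1}f$) somewhere in a cyclic permutation of $w$, contradicting cyclic reducedness; here I must use that $A$ is the unique triple with $Af$ defined and $B$ the unique triple with $Bf^{-1}$ defined. If two neighbouring steps both use the new action, they are separated by an element $g\in G_0\setminus\{1\}$ (as in the previous lemma), giving a segment $A\xrightarrow{f}B\xrightarrow{g}?$; but then $g$ must leave $B$ invariant or carry $B$ back into the domain of $f^{-1}$, i.e.\ $Bg=B$, and since the old action was good and $B$ is an $a$-, $t$- or free triple, this pins $g$ down (it shifts, flips, or fixes $B$) and again produces a forbidden subword of the shape $f\,g\,f^{-1}$ ruled out by Normal~forms~\ref{d:normalform}, or forces $g=1$. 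Either way no genuinely new invariance arises, so conditions 3, 4 and 5 are preserved.

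For condition 6 I would argue that the only genuinely new feature of the extended action is that $A$ and $B$ are now connected, $Af=B$. Since by hypothesis $B$ is an $a$-, $t$-, or free triple, its setwise stabilizer was already understood under the old good action: if $B$ is a free triple then nothing shifts or flips it, while if $B$ is an $a$- or $t$-triple then it is shifted (resp.\ flipped) but the complementary flip (resp.\ shift) is absent, so $B$'s stabilizer is not all of $S_3$. Thus the hypothesis of condition~6---that some $g$ shifts $C$ \emph{and} some $h$ flips $C$---can be met for a triple in the new cycle only by triples whose full $S_3$-stabilizer already existed, and for those the connecting element $g'$ with $Cg'=A$ existed before. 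I would also note that connecting $A$ to $B$ does not create a new $S_3$-stabilizer for $B$ itself, precisely because we chose $f$ of the single appropriate type.

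The main obstacle I expect is the careful bookkeeping in the two-neighbour case: one must check that no interaction between the new edge $Af=B$ and the existing good action can manufacture a shift-and-flip pair at a triple that previously had only a partial $S_3$ symmetry, thereby violating condition~6 or producing a nontrivial stabilizer violating condition~3. Making this rigorous relies on the uniqueness of $A$ as the $\langle a,t\rangle$-invariant triple and on the type-matching between $f$ and $B$, so the delicate step is verifying that these two facts together prevent $f$ from ever closing up an $S_3$ at a new triple.
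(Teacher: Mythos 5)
Your high-level strategy --- reduce everything to showing that no cyclically reduced word containing $f^{\pm1}$ can leave a triple invariant, and then inherit conditions 3--6 of Definition~\ref{d:good} from goodness of the old action --- is indeed the paper's strategy. But your case analysis is pattern-matched from Lemma~\ref{l:ext free}, and it does not transfer. In that lemma the new triples lie outside $X$, in a regular $G_0$-orbit, so a cycle can only enter and leave that orbit via $f^{\pm1}$ and can only move within it by single elements of $G_0$; that is what makes the dichotomy ``isolated occurrence'' versus ``two occurrences separated by one $G_0$-letter'' exhaustive there. In the present lemma both $A$ and $B$ lie in $X$, where all generators may act, so between two occurrences of $f^{\pm1}$ the word can be an \emph{arbitrary} $f$-free word $v'$, of any length and in any generators. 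Your case 1 claim that an isolated use of $f$ ``forces a subword $ff^{-1}$'' is false as stated: a word of the form $f\,v'\,f^{-1}v$ with $v'$ long and $f$-free has no such subword, and nothing in your argument collapses $v'$; moreover the next occurrence after $f$ could again be $f$ (same sign), a configuration your cases never address. Your case 2 assumption that consecutive uses of the new edge are ``separated by an element $g\in G_0\setminus\{1\}$'' is precisely the point that fails to be justified here.

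The repair, which is the paper's actual proof, needs one preliminary fact you never establish: since the setwise stabilizer of $A$ is $S_3$ and by hypothesis no $g\in G$ satisfies $Ag=B$, \emph{no word avoiding $f^{\pm1}$ can take $A$ to $B$ even as a set}. With this in hand one normalizes $w=f\cdot w'$ (so the invariant triple is $A$, since $f$ acts only there), deduces that $w'$ must contain $f^{\pm1}$, and writes $w'=v'f^{\epsilon}v$ with $v'$ the maximal $f$-free prefix. If $\epsilon=1$ then $Bv'=A$ as a set, contradicting the preliminary fact; if $\epsilon=-1$ then $v'$ leaves $B$ invariant, and goodness of the old action together with the type of $B$ pins $v'$ down to $1$, $t$ or $a^{\pm1}$, so that $f\,v'\,f^{-1}$ is a subword forbidden by Normal forms~\ref{d:normalform} (here the relations $ft=tf$ for $f\in S$ and $fa=af$ for $f\in U$, built into the amalgam, are what make the type-matching essential). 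Note also that pinning $v'$ down to exactly $t$ or $a^{\pm1}$, rather than to an arbitrary conjugate or to a shift when $B$ is a $t$-triple, itself uses condition 6 of Definition~\ref{d:good} together with the hypothesis $Ag\neq B$ to rule out that $B$ admits both a shift and a flip; you assert the corresponding claim about the stabilizer of $B$ (``the complementary flip is absent'') without proof. So the ingredients you list are the right ones, but as written the argument has a genuine hole where the same-sign case and arbitrary-length $v'$ must be handled.
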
  
\begin{proof} 
Again it is clear that this defines a partial action
in the sense of Definition \ref{d:partial} and that 1. and 2.
of Definition \ref{d:good} continue to hold.
Also note that
 since the setwise stabilizer of $A$ in $G$
  is $S_3$ the assumptions imply that there is no $w\in G$ not containing
  $f,f^{-1}$ taking $A$ to $B$ as a set.  

 To prove the remaining conditions of \ref{d:good} let $w$ be a cyclically reduced word (in the sense of \ref{d:normalform}) leaving  some triple $C\in (X)^3$ invariant.
 Since the previous action was good, it suffices to show that $w$ does not contain $f$
 or $f^{-1}$. Suppose otherwise. Then by
 cyclically permuting $w$ and taking inverses we may assume
 that $w=f\cdot w'$. So $w$ stabilizes $A$ and $w'$ takes $B$ to $A$ as
 a set.
 By assumption on $A,B$ the subword $w'$ must contain $f$.
 Hence we may write $w'=v'\cdot f^\epsilon v$ for some subword $v'$ not containing $f$ or $f^{-1}$. 
 We distinguish two cases:
  \begin{enumerate}
  \item $\epsilon=1$. Then $v'$ takes $B$ to $A$ as a set as $f$ is only defined on $A$.
  Since $v'$ does not contain $f,f^{-1}$, this contradicts the assumption on $A,B$.
  \item $\epsilon=-1$. Then  $v'$  leaves $B$    invariant. Since the previous action was good, we either have $v'=t$ and $f\in S$ or $v'=a^{\pm 1}$ and $f\in U$ (according to whether $B$ is an $a$- or a $t$-triple). In either case  $v'$ commutes with $f$, contradicting the assumption that $w$ be cyclically reduced.
  \end{enumerate}\end{proof}

\begin{corollary}
  Assume that $G$ acts well on $X$ with $|X|\leq \max\{\aleph_0,|G|\}$
  and there are sufficiently many elements of $R, S$ and $U$ whose action
  is not yet defined anywhere. Then we can extend the partial action
  of $G$ on $X$ to a sharply $3$-transitive (total) action on some appropriate
  superset $Y$.
\end{corollary}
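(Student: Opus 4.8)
The plan is to realise $Y$ as the union of an increasing chain of good partial actions, obtained by repeatedly applying the two preceding lemmas so as to meet two kinds of requirements. The \emph{totality} requirements ask, for each point $x$ and each generator $f\in S\cup S\inv\cup R\cup R\inv\cup U\cup U\inv$, that $xf$ eventually be defined; these are discharged by Lemma~\ref{l:ext free}. The \emph{transitivity} requirements ask, for each triple $C\in(X)^3$, that $C$ eventually be connected to $A$; these are discharged by Lemma~\ref{l:join}. Since both lemmas preserve goodness, every member of the chain is good, and meeting all requirements will force the limit action to be total and sharply $3$-transitive.

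I would first note why Lemma~\ref{l:join} suffices for the transitivity requirements. The setwise stabiliser of any triple $C$ embeds into $S_3$ (its kernel is the ordered-triple stabiliser, which is trivial by Condition 3 of Definition~\ref{d:good}), so by Conditions 4 and 5 every nontrivial stabilising element is a flip (a conjugate of $t$) or a shift (a conjugate of $a$); hence $C$ is a free, a $t$- or an $a$-triple, unless its full stabiliser is $S_3$, in which case Condition 6 already furnishes $g'$ with $Cg'=A$. For the remaining three cases Lemma~\ref{l:join} applies, using a fresh generator from $R$, $S$ or $U$ respectively. Moreover it is enough to connect each triple as a \emph{set}: once $Af=B$ holds, for every $\sigma$ in the setwise stabiliser $S_3$ of $A$ the element $\sigma f$ satisfies $A(\sigma f)=(A\sigma)f$, and as $\sigma$ ranges over $S_3$ this realises all six orderings of $B$. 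Consequently, if every triple of $Y$ is connected to $A$ as a set, then for any two ordered triples $C,C'$ we obtain $g,g'$ with $Ag=C$, $Ag'=C'$, whence $Cg\inv g'=C'$: the action is transitive on ordered triples.

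The chain itself I would build by transfinite recursion of length $\kappa:=\max\{|G_0|,\aleph_0\}$ (note $|G|=\kappa$ and $|X|\le\kappa$), starting from $X_0=X$, taking unions at limit stages, and at each successor stage addressing a single requirement by the appropriate lemma. A standard bookkeeping (dovetailing) scheme schedules the requirements so that each one — including those created later, since new points spawn new triples and new undefined pairs — is addressed at some stage. Throughout one maintains $|X_\gamma|\le\kappa$: each application of Lemma~\ref{l:ext free} adjoins one copy of $G_0$, of size $\le\kappa$, and fewer than $\kappa$ such applications have been made before stage $\gamma$. In the union $Y$, every point lies in some $X_\gamma$ and every pair $(x,f)$ with $xf$ undefined is resolved at a later stage, so each generator, and hence each element of $G$, acts everywhere; by the consistency remark following Normal forms~\ref{d:normalform} the resulting total operation is a genuine action of $G$. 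Goodness passes to $Y$ because any putative violation — a fixed point of $a$, a second fixed point of $t$, or a nontrivial element fixing some ordered triple — involves only finitely many points and one group element and hence already appears in some $X_\gamma$; in particular Condition 3 survives, giving trivial ordered-triple stabilisers. Together with transitivity on ordered triples this is exactly sharp $3$-transitivity, and $|Y|\le\aleph_0\cdot\kappa=\kappa$.

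The step I expect to be the main obstacle is the bookkeeping, together with the bound on generator usage. Fairness must be arranged against a moving target: new requirements keep appearing, so the scheduling has to guarantee that none is postponed forever. And each transitivity requirement consumes a previously unused generator of a prescribed type, so one must check that these never run out. This follows from the cardinal arithmetic: at most $\kappa$ transitivity requirements are ever addressed, and at any stage $\gamma<\kappa$ strictly fewer than $\kappa$ generators of each type have been used, while $|R|=|S|=|U|=\kappa$; hence a fresh generator of the required type is always available. These verifications are routine, the genuine content residing entirely in Lemmas~\ref{l:ext free} and~\ref{l:join}.
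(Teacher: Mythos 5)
Your scaffolding (the transfinite chain of good partial actions, totality requirements discharged by Lemma~\ref{l:ext free}, goodness passing to unions, the remark that setwise connectivity to $A$ yields transitivity on ordered triples via the $S_3$-stabilizer of $A$, and the cardinality bookkeeping) is essentially the paper's, but there is a genuine gap at the key step ``hence $C$ is a free, a $t$- or an $a$-triple \dots\ for the remaining three cases Lemma~\ref{l:join} applies''. Being stabilized by a \emph{conjugate} of $t$ (resp.\ of $a$) is not the same as being a $t$-triple (resp.\ an $a$-triple): in the paper's terminology, in which Lemma~\ref{l:join} is both stated and proved, a $t$-triple is a triple invariant under the element $t$ itself. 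A triple whose stabilizer is generated by a proper conjugate $hth^{-1}$ is of none of your three types, and it cannot be connected to $A$ by one fresh generator at all. Concretely: for $s\in S$, Definition~\ref{d:partial} forces $x_0s=x_0$ and $(xt)s=(xs)t$, so a legal extension $As=B$ requires $B$ to have the form $(x_0,y,yt)$, i.e.\ to be invariant under $t$ itself; similarly for $u\in U$ the target $Au$ must have the form $(y,ya,ya^2)$, an orbit of $a$ itself, not of a conjugate of $a$. A triple flipped only by $hth^{-1}$ is in general invariant under neither (it contains the unique fixed point of that conjugate, not $x_0$), so the extension you want to perform is not even a partial action in the sense of Definition~\ref{d:partial}. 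Treating such a triple as free fails as well: if $C$ is flipped by $\tau=hth^{-1}\neq t$ and you set $Ar=C$ with $r\in R$, then $r\tau r^{-1}$ flips $A$, so $r\tau r^{-1}t$ induces an even permutation of $A$; hence either it or its cube fixes $A$ pointwise, yet $r\tau r^{-1}t$ is a cyclically reduced word of length $4$ in the free product with $F(R)$ and so has infinite order, violating Condition 3 of Definition~\ref{d:good}. So the discharge of your transitivity requirements fails exactly for conjugate-stabilized triples, which are the typical triples once the construction has run for a few stages.

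The missing idea is the conjugation argument, which is the actual content of the paper's proof of this corollary. One schedules only these requirements: (i) every $a$-triple and (ii) every $t$-triple (in the strict sense) is connected to $A$ via Lemma~\ref{l:join}; (iii) every triple becomes shiftable. Requirement (iii) is met as follows: if $B$ cannot be shifted and is not free, then any nontrivial element leaving $B$ invariant must flip it, hence equals $hth^{-1}$ for some $h$ by Condition 4 of goodness; then $Bh$ is a genuine $t$-triple, connected to $A$ by (ii), so $B$ is connected to $A$ and thereby shiftable after all; if instead $B$ is free, set $Ar=B$ with a fresh $r\in R$, and $r^{-1}ar$ shifts $B$. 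Sharp $3$-transitivity then follows from (i) and (iii) alone: any triple $B$ is shifted by some element, which by Condition 5 is of the form $hah^{-1}$, so $Bh$ is a genuine $a$-triple, connected to $A$ by (i), hence so is $B$. Your bookkeeping machinery can stay as it is, but it must track these requirements rather than the direct requirement ``connect $C$ to $A$ by a fresh generator of the type of $C$'s stabilizer'', which, as explained, cannot be discharged.
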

\begin{proof}
  Fix the unique $a$-triple $A=(x_0,x_0a,x_0a^2)=(x_0,x_0a,x_0at)$ 
  in $X_0$. Using the previous lemmas we define a
  set $Y$ and a $3$-sharp action of $G$ on $Y$ with the following
  properties:
  \begin{enumerate}
  \item all $a$-triples are connected to $A$;
  \item all $t$-triples  are connected to $A$;
  \item any triple $(x,y,z)$ can be shifted by an element of $G$.
  \end{enumerate}
  The last property can be achieved using Lemma \ref{l:join}:
  suppose $B=(x,y,z)$ cannot be shifted by an element of $G$ at a certain  stage of the construction. Then $B$ is a free triple:
otherwise an element of $G$ leaving $B$ invariant
would have to be an involution $t'\in G$.
  Since $t'=hth^{-1}$ for some $h\in G$, the triple
  $Bh$ is a $t$-triple and hence connected to $A$, making $B$ shiftable
  as well.
  
  Thus, $B$ is a free triple at that stage, and
  we later extend the action by putting $Ar=B$ for some $r\in R$.
  Then $B$  can be shifted by $a^r$.

  This easily implies that the action of $G$ on $Y$ is sharply
  $3$-transitive, i.e.  all triples are connected to
  $A$:  let $B$ be a triple and $g\in G$ shift $B$.  Then we have
  $g=hah\inv$ for some $h\in G$,
  so $Bh$ is an $a$-triple
  and whence connected to $A$.
\end{proof}

\noindent This concludes the proof of Theorem \ref{t:main}.




\vspace{.5cm}

\noindent\parbox[t]{15em}{
Katrin Tent,\\
Mathematisches Institut,\\
Universit\"at M\"unster,\\
Einsteinstrasse 62,\\
D-48149 M\"unster,\\
Germany,\\
{\tt tent@wwu.de}}

\end{document}